\newtheorem{theorem}{Theorem}[section]
\newtheorem{lemma}[theorem]{Lemma}
\newtheorem{corollary}[theorem]{Corollary}
\newtheorem{proposition}[theorem]{Proposition}
\theoremstyle{definition}
\newtheorem{definition}[theorem]{Definition}
\theoremstyle{remark}
\newtheorem{remark}[theorem]{Remark}
\numberwithin{equation}{section}
 \def\={\setminus}  
 \def\ind{\mathop\mathrm{ind}\nolimits}
\def\md#1{#1\mbox{-}\mathrm{mod}}
\def\vec{\mathsf{vec}}
\def\bp{\bullet}    \def\iso{\simeq}
\def\add{\mathop\mathrm{add}\nolimits}
\def\Fun{\mathop\mathrm{Fun}\nolimits}
\def\Rep{\mathop\mathrm{Rep}\nolimits}
\def\rep{\mathop\mathrm{rep}\nolimits}
\def\supp{\mathop\mathrm{supp}\nolimits}
\def\gdim{\mathop\mathrm{gl.dim}}
\def\iff{if and only if }
\def\setsuch#1#2{\left\{\,#1\,|\,#2\,\right\}}
\def\rad{\mathop\mathrm{rad}}
\def\ob{\mathop\mathrm{ob}\nolimits}
\def\Ker{\mathop\mathrm{Ker}}
\def\im{\mathop\mathrm{Im}}
\def\8{\infty}      \def\+{\oplus}
\def\*{\otimes} \def\dd{\partial}
\def\op{\mathop\mathrm{op}}
\def\al{\alpha} \def\be{\beta}  
\def\Ga{\Gamma}       \def\la{\lambda}
 \def\Si{\Sigma} \def\eps{\varepsilon}
\def\fR{\mathbf r}
\def\mN{\mathbb N}  \def\mZ{\mathbb Z}
\def\mA{\mathbb A}  \def\mD{\mathbb D}
\def\mE{\mathbb E}
\def\kA{\mathcal A}     
\def\kC{\mathcal C} \def\kD{\mathcal D}
\def\kK{\mathcal K} \def\kV{\mathcal V}
\def\kW{\mathcal W} \def\kQ{\mathcal Q}
\def\kP{\mathcal P}     
 \def\kI{\mathcal I}
\def\kX{\mathcal X}
\def\dA{\mathfrak A}    
    \def\dP{\mathfrak P}
      \def\sJ{\mathsf J}
\def\sR{\mathsf R}
     \def\oV{\overline{\mathcal V}}
\def\Mk{\Bbbk}
\def\oV{\overline{\kV}}
\begin{document}

\title{Derived categories for algebras with radical square zero}

\author{Viktor  Bekkert}
\address{Departamento de Matem\'atica, ICEx, Universidade Federal de Minas
Gerais, Av.  Ant\^onio Carlos, 6627, CP 702, CEP 30123-970, Belo
Horizonte-MG, Brasil}

\email{bekkert@mat.ufmg.br}
\thanks{The first author was supported by FAPESP (Grant N 98/14538-0) and CNPq (Grant 301183/00-7).}

\author{Yuriy  Drozd}
\address{Institute of Mathematics, National Academy of Sciences of Ukraine, Te\-reschen\-kivska 3, 01601 Kiev, Ukraine}
\email{drozd@imath.kiev.ua}

\subjclass[2000]{Primary 16G60, 16G70; Secondary 15A21, 16E05, 18E30}

\dedicatory{Dedicated to Ivan Shestakov for his 60th
birthday.}

\keywords{Derived categories, algebras with radical square zero, derived representation type.}

\begin{abstract}
{We determine the derived representation types of algebras with radical square zero and give a description of the
indecomposable objects in their bounded derived categories. }
\end{abstract}

\maketitle

\section*{Introduction}

This paper is based on a talk given by the first author
at the Conference "Algebras, Representations and Applications" (Sao
Paulo, Brazil, August 2007).

Let $\kA$ be a finite dimensional algebra over an algebraically
closed field $\Mk$, $\md\kA$ be the category of left finitely
generated $\kA$-modules and let $\kD^b(\kA)$ be the bounded derived
category of the category $\md\kA$.

The category  $\kD^b(\kA)$ is known for few algebras $\kA$. For
example, the structure of $\kD^{b}(\kA)$ is well-known for
hereditary algebras of finite and tame type \cite{H} and for
tubular algebras \cite{HR}.

In the present paper we investigate the derived category
$\kD^b(\kA)$ for the finite dimensional algebras with radical
square zero.

The structure of the paper is as follows. In Section~\ref{s1}
preliminary results about derived categories are given. We replace
finite dimensional algebras by \emph{locally finite dimensional
categories} (shortly  \emph{lofd}). If such a category only has
finitely many indecomposable objects, this language is equivalent to
that of finite dimensional algebras.

In Section~\ref{s2} for a given lofd category $\kA$ with radical
square zero, we construct following \cite{BD} a box such that its representations
classify the objects of the derived category $\kD^b(\kA)$,
 which is used in the next sections.

It follows from \cite{BD} that every lofd category over an
algebraically closed field is either derived tame or derived wild.
In Section~\ref{s3} we establish the derived representation type for
lofd categories with radical square zero.

A description of indecomposables in $\kD^{b}(\kA)$ is  given in
Section~\ref{s4}. Namely, we reduce this problem to the problem of
description of indecomposables finite dimensional modules for some
hereditary path algebra $\Mk\kQ_\kA$. In derived tame cases we
describe indecomposables in $\kD^{b}(\kA)$ explicitly.

After this paper was finished, we were told that similar results had been obtained by R. Bautista and S. Liu \cite{BL}.
Note that they use quite different methods.

\section{Derived categories}\label{s1}

 We will follow in general the notations and terminology of \cite{BD} (see also \cite{D3}, \cite{D4}).

 We consider categories and algebras over a fixed algebraically closed field $\Mk$. A $\Mk$-category $\kA$ is called
 \emph{locally finite dimensional} (shortly \emph{lofd}) if the following conditions hold:
 \begin{enumerate}
\item  All spaces $\kA(x,y)$ are finite dimensional for all
objects $x,y$.
 \item
  $\kA$ is \emph{fully additive}, i.e. it is additive and all idempotents in it split.\\
  Conditions 1,2 imply that the category $\kA$ is \emph{Krull--Schmidt},  i.e. each object uniquely decomposes into
 a direct sum of indecomposable objects; moreover, it is \emph{local}, i.e. for each indecomposable object $x$ the algebra
 $\kA(x,x)$ is local. We denote by $\ind\kA$ a set of representatives of isomorphism classes of indecomposable objects
 from $\kA$.
 \item
  For each object $x$ the set $\setsuch{y\in\ind\kA}{\kA(x,y)\ne0\text{ or }\kA(y,x)\ne0}$ is finite.
\end{enumerate}

 We denote by $\vec$ the category of finite dimensional vector spaces over $\Mk$ and by $\md\kA$ the category of
 \emph{finite dimensional $\kA$-modules}, i.e. functors $M:\kA\to\vec$ such that $\setsuch{x\in\ind\kA}{Mx\ne0}$
 is finite.

 For an arbitrary category $\kC$ we denote by $\add\kC$ the minimal fully additive category containing $\kC$. For instance,
 one can consider $\add\kC$ as the category of finitely generated projective $\kC$-modules; especially, $\add\Mk=\vec$.
 We denote by $\Rep(\kA,\kC)$ the category of functors $\Fun(\kA,\add\kC)$ and call them \emph{representations} of
 the category $\kA$ in the category $\kC$. Obviously, $\Rep(\kA,\kC)\iso\Rep(\add\kA,\kC)$. If the category $\kA$ is lofd,
 we denote by $\rep(\kA,\kC)$ the full subcategory of $\Rep(\kA,\kC)$ consisting of the representations $M$ with \emph{finite
 support} $\supp M=\setsuch{x\in\ind\kA}{Mx\ne0}$. In particular, $\rep(\kA,\Mk)=\md\kA$.

We recall that a quiver is locally finite if at most finitely many
arrows start or stop at each vertex. We recall also that every lofd category is equivalent to a quiver category, i.e. $\kA=\add\Mk \kQ/\kI$,
where $\kQ=\kQ_\kA$ is the locally finite quiver of $\kA$ and $\kI$
is an admissible ideal in the path category $\Mk \kQ$ of $\kQ$.

 We denote by $\kD(\kA)$ (respectively, $\kD^b(\kA)\,$) the \emph{derived category} (respectively,
(two-sided) bounded derived category) of the category $\kA$-mod,
where $\kA$ is a lofd category.
 These categories are triangulated categories. We denote the shift
 functor by $[1]$, and its inverse by $[-1]$.
 Recall that $\kA^{\op}$ embeds as a full subcategory into $\md\kA$. Namely, each object $x$ corresponds to the functor
 $\kA^x=\kA(x,\_\,)$. These functors are projective in the category $\md\kA$; if $\kA$ is fully additive, these are
 all projectives (up to isomorphism). On the other hand,
 $\md\kA$ embeds as a full subcategory into $\kD^b(\kA)$: a module $M$ is treated as the complex only having a
 unique nonzero component equal $M$ at the $0$-th position. It is also known that $\kD^b(\kA)$ can be identified
 with the category $\kK^{-,b}(\kA)$ whose objects are right bounded complexes of projective modules with bounded homology
 (that is, complexes of finitely generated projective modules with the property that the homology groups are non zero only at
 a finite number of places) and morphisms
 are homomorphisms of complexes modulo homotopy \cite{GM}. If $\gdim\kA<\8$, every bounded complex has
 a bounded projective resolution, hence $\kD^b(\kA)$ can identified with $\kK^b(\kA)$, the category of
 bounded projective  complexes modulo homotopy, but that is not the case if $\gdim\kA=\8$.
 Moreover, if $\kA$ is lofd, we can confine the considered complexes by \emph{minimal} ones, i.e.
 always suppose that $\im d_n\subseteq\rad P_{n-1}$ for all $n$.
 We denote by $\kP^b_{\min}(\kA)$ the category
 of minimal bounded complexes of  projective $\kA$-modules.

Given  $M \in \kD^b(\kA)$, we denote by $ P_{M} $  the minimal projective
resolution of $M$.

 For $P\not=0 \in \kK^{b}(\kA)$,
let $t$ be the minimal  number such that $P_{i} = 0$ for $i > t$.
Then, $\be(P)$ denotes the {\em (good) truncation} of $P$ below
$t$, i.e. the complex given by
$$
\be(P)_{i} =\left\{ \begin{array}{lll}
P_{i} & \mbox{, if $i\leq t$;}\\
\Ker d(P)_{t} & \mbox{, if $i=t+1$;}\\
0 & \mbox{, otherwise,}
\end{array}
\right.
$$

$$
d(\be(P))_i =\left\{ \begin{array}{lll}
d(P)_i & \mbox{, if $i\leq t$;}\\
i_{\Ker d(P)_{t}} & \mbox{, if $i=t+1$;}\\
0 & \mbox{, otherwise,}
\end{array}
\right.
$$

\noindent where $i_{\Ker d^{t}_{P}} $ is the obvious embedding.

Let $\overline{{\kX}\,(\kA)}=$ $\{\,M\in \ind\kP^b_{\min}(\kA)\,|$
$P_{\beta(M)}\not\in \kK^{b}(\kA)\,\}$. Let $\cong_{\kX}$ be the
equivalence relation on the set $\overline{{\kX}\,(\kA)}$ defined
by $M\cong_{\kX} N$ \iff $P_{\beta(M)}\cong$ $P_{\beta(N)}$ in
$\kK^{-,b}(\kA)$. We use the notation $\kX\,(\kA)$ for a fixed set
of representatives of the quotient set $\overline{{\kX}\,(\kA)}$
over the equivalence relation $\cong_{\kX}$.

\begin{proposition}\cite{BM} \label{ver}
 $\ind\kD^b(\kA) = \ind\kP^b_{\min}(\kA) \cup
\{\be(M)\,|\, M \in {\kX}\,(\kA)\}.$
\end{proposition}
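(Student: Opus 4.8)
The plan is to analyze an arbitrary indecomposable object $X \in \ind\kD^b(\kA)$ through its identification with the category $\kK^{-,b}(\kA)$ of right-bounded complexes of projectives with bounded homology, and to confine attention to minimal such complexes (using the remark, recalled in Section~\ref{s1}, that every object of $\kD^b(\kA)$ has a minimal right-bounded projective resolution with $\im d_n \subseteq \rad P_{n-1}$). So write $X = P_X$, a minimal right-bounded complex of projectives. The dichotomy is clear: either $P_X$ is actually bounded, i.e. $P_X \in \kK^b(\kA) = \kP^b_{\min}(\kA)$ after passing to the minimal representative, in which case $X \in \ind\kP^b_{\min}(\kA)$; or $P_X$ has infinitely many nonzero terms to the left. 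The content of the proposition is that the second case is governed precisely by the set $\kX(\kA)$ via the truncation functor $\be$.

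First I would treat the bounded case and check the easy inclusion $\ind\kP^b_{\min}(\kA) \subseteq \ind\kD^b(\kA)$: a minimal bounded complex of projectives has no null-homotopic direct summands (minimality forces every homotopy to vanish on the obvious summands), so its endomorphism ring in $\kK^b(\kA)$ agrees with the one computed before passing to homotopy up to radical, and indecomposability is preserved; conversely if $X$ is indecomposable and $P_X$ is bounded then $P_X$ is indecomposable in $\kP^b_{\min}(\kA)$ by the same minimality argument. The reverse inclusion $\ind\kP^b_{\min}(\kA) \subseteq \ind\kD^b(\kA)$ as \emph{objects} of $\kD^b(\kA)$ is then immediate since $\kP^b_{\min}(\kA) \subseteq \kK^{-,b}(\kA)$ fully faithfully. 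Next, for the unbounded case, given indecomposable $X$ with $P_X$ not bounded, I would show that $P_X \cong P_{\be(M)}$ in $\kK^{-,b}(\kA)$ for some indecomposable minimal bounded complex $M$ whose own minimal projective resolution $P_{\be(M)}$ is unbounded. The point is that $\be$ applied to the top truncation of $P_X$ at some place $t$ (chosen large enough that the homology is concentrated in degrees $\le t$) yields a bounded complex $M$ of modules, indecomposable because $X$ is, with the defining property $P_{\be(M)} = P_X \notin \kK^b(\kA)$; this shows $M$ lies in $\overline{\kX(\kA)}$, and replacing it by its class representative in $\kX(\kA)$ does not change the object of $\kD^b(\kA)$, since $\cong_\kX$ is exactly equality of minimal projective resolutions in $\kK^{-,b}(\kA)$.

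The main obstacle I anticipate is making precise the correspondence between "unbounded minimal projective complexes" and the truncation data: one must verify that every unbounded $P_X$ arises as $P_{\be(M)}$ for a \emph{bounded} complex $M$ (not merely a complex of modules sitting in finitely many degrees but possibly with infinite-dimensional terms), that $M$ can be taken minimal and indecomposable, and that the choice of truncation level $t$ is irrelevant up to $\cong_\kX$. Concretely, given unbounded minimal $P_X = (\dots \to P_{t+1} \to P_t \to \dots \to P_s \to 0)$ with bounded homology, pick $t$ with $H_i(P_X) = 0$ for $i > t$; then $M := \be(P_X)$ (the good truncation at $t$, as defined in the excerpt) is a bounded complex of modules, its minimal projective resolution is $P_X$ itself (minimality of $P_X$ ensures no cancellation), and $\be(M) = M$ recovers $X$ in $\kD^b(\kA)$. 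Indecomposability of $M$ follows because $\End_{\kD^b(\kA)}(M) = \End_{\kD^b(\kA)}(X)$ is local. One then checks that a different admissible truncation level gives a complex $M'$ with $P_{\be(M')} = P_X = P_{\be(M)}$, hence $M \cong_\kX M'$, so the class in $\kX(\kA)$ is well-defined. Finally, the two families on the right-hand side are disjoint as claimed: an object of the form $\be(M)$ with $M \in \kX(\kA)$ has unbounded minimal projective resolution, so it cannot lie in $\ind\kP^b_{\min}(\kA)$. Assembling these observations gives the stated equality of sets. I would expect the bookkeeping around good truncation and the independence of $t$ to be the only genuinely delicate points; everything else is a direct consequence of the Krull--Schmidt property of $\kD^b(\kA)$ and the identification $\kD^b(\kA) \iso \kK^{-,b}(\kA)$.
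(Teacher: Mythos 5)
A preliminary remark: the paper gives no proof of this proposition --- it is imported verbatim from \cite{BM} --- so there is no in-text argument to compare yours with; what follows assesses your proposal on its own terms. Your overall strategy (identify $\kD^b(\kA)$ with $\kK^{-,b}(\kA)$, pass to the minimal projective resolution $P_X$, and split according to whether $P_X$ is bounded) is the natural one, and the bounded case and the disjointness of the two families are handled correctly.

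The gap is in the unbounded case. By definition $\overline{\kX(\kA)}\subseteq\ind\kP^b_{\min}(\kA)$, so an element of $\kX(\kA)$ must be an indecomposable bounded minimal complex of \emph{projectives}. The object you exhibit, $M:=\be(P_X)$ (a good truncation), contains the non-projective term $\Ker d(P_X)_t$ in degree $t+1$, so the assertion that ``$M$ lies in $\overline{\kX(\kA)}$'' is a type error. What has to be produced is the brutal truncation $N=(0\to P_t\to\cdots\to P_s\to 0)$ of $P_X$, for which one checks $\be(N)\iso X$ in $\kD^b(\kA)$ and $P_{\be(N)}=P_X\notin\kK^b(\kA)$ (and here $t$ must be chosen with $H_i(P_X)=0$ for all $i\ge t$, not just $i>t$, or else $\be(N)$ kills the top homology and is no longer quasi-isomorphic to $P_X$). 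This is not merely notational, because your indecomposability argument collapses with it: $\End_{\kD^b(\kA)}(\be(P_X))\iso\End_{\kD^b(\kA)}(X)$ does hold, but $N$ is a \emph{different} object of $\kD^b(\kA)$ --- it acquires extra homology in top degree --- and a brutal truncation of an indecomposable complex need not be indecomposable. Since membership in $\kX(\kA)$ requires indecomposability of $N$ in $\kP^b_{\min}(\kA)$, this is precisely the nontrivial content of the proposition and it is left unaddressed. The repair: decompose $N=\bigoplus_i N_i$ into indecomposables in $\kP^b_{\min}(\kA)$; adjoining the kernel of the degree-$t$ differential summandwise gives $X\iso\bigoplus_i X_i$ in $\kD^b(\kA)$, where $X_i=\be(N_i)$ if $(N_i)_t\ne0$ and $X_i=N_i$ otherwise; indecomposability of $X$ forces all but one $X_{i_0}$ to vanish in $\kD^b(\kA)$, and unboundedness of $P_X$ forces $(N_{i_0})_t\ne0$, since otherwise $X\iso N_{i_0}$ would have bounded minimal projective resolution. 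Then $N_{i_0}\in\overline{\kX(\kA)}$ and $\be(N_{i_0})\iso X$, as required. Your observation that changing the truncation level only moves $N_{i_0}$ within its $\cong_{\kX}$-class is correct and completes the argument once the above is supplied.
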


\begin{remark}
If $\kA$ has finite global dimension, we have
$\kX\,(\kA)=\emptyset$ and hence $\ind \kD^b(\kA) =
\ind\kP^b_{\min}(\kA)$.
\end{remark}

We recall the definitions of derived tame and derived wild lofd
categories from \cite{BD}.

  \begin{definition}\label{tw}
  \begin{enumerate}
\item   The \emph{rank} of an object $x\in\kA$ (or of the
corresponding projective module $\kA^x$) is the function
 $\fR(x):\ind\kA\to\mZ$ such that $x\iso\bigoplus_{y\in\ind\kA}\fR(x)(y)y$. The \emph{vector rank} $\fR_\bp(P_\bp)$
 of a bounded complex of projective $\kA$-modules is the sequence $(\dots,\fR(P_n),\fR(P_{n-1}),\dots)$ (actually it
 has only finitely many nonzero entries).
 \item
  We call a \emph{rational family} of bounded minimal complexes over $\kA$ a bounded complex $(P_\bp,d_\bp)$
 of finitely generated projective $\kA\*\sR$-modules, where $\sR$ is a \emph{rational algebra},
 i.e. $\sR=\Mk[t,f(t)^{-1}]$ for a nonzero polynomial $f(t)$, and $\im d_n\subseteq\sJ P_{n-1}$
 For such a complex   we define $P_\bp(m,\la)$, where $m\in\mN,\,\la\in\Mk,\,f(\la)\ne0$, the complex
 $(P_\bp\*_\sR\sR/(t-\la)^m,d_\bp\*1)$. It is indeed a complex of projective $\kA$-modules. We put $\fR_\bp(P_\bp)=
 \fR_\bp(P_\bp(1,\la))$ (this vector rank does not depend on $\la$).
 \item
  We call a lofd category $\kA$ \emph{derived tame} if there is a set $\dP$ of rational families of bounded complexes over
 $\kA$ such that:
  \begin{enumerate}
 \item  For each vector rank $\fR_\bp$ the set $\dP(\fR_\bp)=\setsuch{P_\bp\in\dP}{\fR_\bp(P_\bp)=\fR}$ is finite.
 \item
  For each vector rank $\fR_\bp$ all indecomposable complexes $(P_\bp,d_\bp)$ of projective $\kA$-modules
 of this vector rank, except finitely many isomorphism classes, are isomorphic to $P_\bp(m,\la)$ for some $P_\bp\in\dP$
 and some $m,\la$.
\end{enumerate}
 The set $\dP$ is called a \emph{parameterising set} of $\kA$-complexes.
 \item
  We call a lofd category $\kA$ \emph{derived wild} if there is a bounded complex $P_\bp$ of projective modules over
$\kA\*\Si$, where $\Si$ is the free $\Mk$-algebra in 2 variables,
such that, for every finite dimensional $\Si$-modules
 $L,L'$,
  \begin{enumerate}
\item   $P_\bp\*_\Si L\iso P_\bp\*_\Si L'$ \iff $L\iso L'$.
 \item
  $P_\bp\*_\Si L$ is indecomposable \iff so is $L$.
\end{enumerate}
 (It is well-known that then an analogous complex of $\kA\*\Ga$-modules exists for every finitely generated $\Mk$-algebra
 $\Ga$.)
\end{enumerate}
 \end{definition}

 Note that, according to these definitions, every \emph{derived discrete} (in particular, \emph{derived finite}) lofd category
 \cite{V} is derived tame (with the empty set  $\dP$).

It was proved in \cite{BD} that every lofd category over an
algebraically closed field is either derived tame or derived wild.

\section{Related boxes}\label{s2}

Recall (see \cite{D1}, \cite{D2}) that a {\em box} is a pair $\dA
= (\kA, \kV)$ consisting of a category $\kA$ and an $\kA$-coalgebra
$\kV$. We denote by $\mu$ the comultiplication in $\kV$, by $\eps$
its counit and by $\oV=\ker\eps$ its {\em kernel}. We always suppose
that $\dA$ is {\em normal}, i.e. there is a {\em section} $\omega :
x \to \omega_{x}\,\, (x\in \ob \kA)$ such that $\eps(\omega_{x}) =
1_{x}$ and $\mu(\omega_{x}) = \omega_{x}\otimes \omega_{x}$ for all
$x$. A category $\kA$ is called {\em free} if it is isomorphic to a
path category $\Mk\kQ $ of a quiver $\kQ$. A normal box $\dA = (\kA,
\kV)$ is called {\em free} if so is the category $\kA$, while the
kernel $\oV$ is a free $\kA$-bimodule.

Recall that the {\em differential} of a normal box $\dA = (\kA,
\kV)$ is the pair $\dd = (\dd_{0}, \dd_{1})$ of mappings, $\dd_{0} :
\kA \to \oV$, $\dd_{1} : \oV \to \oV\otimes_{\kA}\oV$, namely
$$ \dd_{0}a = a\omega_{x}- \omega_{y}a\,\, \textrm{for}\,\, a\in \kA(x, y),$$
$$ \dd_{1}v = \mu(v) - v\otimes \omega_{x} -\omega_{y}\otimes v\,\, \textrm{for}\,\,
v\in \oV(x, y).$$

A {\em representation} of a box $\dA = (\kA, \kV)$ over a category
$\kC$ is defined as a functor $M : \kA \to \add \kC$. A {\em
morphism} of such representations $f : M\to N$ is defined as a
homomorphisms of $\kA$-modules $\kV\otimes_{\kA}M\to N$. If $g : N
\to L$ is another morphism, there product is defined as the
composition

$$ \kV\otimes_\kA M
\stackrel{\mu\otimes 1}\rightarrow \kV\otimes_{\kA}\kV\otimes_{\kA}
M \stackrel{1\otimes f}\rightarrow \kV\otimes_{\kA} N
        \stackrel{g}\rightarrow L\,.  $$

 Thus we obtain the {\em category of representations} $\Rep(\dA, \kC)$. If $\dA$ is a
free box, we denote by $\rep(\dA, \kC)$ the full subcategory of
$\Rep(\dA, \kC)$ consisting of representations with finite support
$\supp M = \{ x\in \ob\kA |Mx \not= 0 \}$. If $\kC = \vec$, we write
$\Rep(\kA)$ and $\rep(\kA)$.

Given a lofd $\kA$ with radical square zero, we are going to
construct a box such that its representations classify the objects
of the derived category $\kD^b(\kA)$ (see \cite{BD}, \cite{D3} for the case
of an arbitrary lofd category).

Let $\kQ=\kQ_\kA$ be a quiver of $\kA$. Given two vertices $a$ and
$b$ we define $\kQ_1[a,b]$ as the set of all arrows from $a$ to
$b$.
Given an arrow $a$ of $\kQ$, let us denote by $a^{-1}$ a formal
inverse of $a$, and let us set $s(a^{-1})=t(a)$ and $t(a^{-1})=s(a)$.
By a \emph{walk} $w$ of length $n$ we mean a sequence $w_1w_2\cdots w_n$
where each $w_i$ is either of the form $a$ or $a^{-1}$, $a$ being an arrow in $\kQ$ and
where $s(w_{i+1})=t(w_i)$ for $1\leq i<n$. For each walk $w=w_1w_2\cdots w_n$ we define
$s(w)=s(w_1)$ and $t(w)=t(w_n)$. By definition, a \emph{closed walk} is a walk $w$ such that
$s(w)=t(w)$.

 Consider the path category $\kA^{\Box}=\Mk \kQ^{\Box}$, where
 $\kQ^{\Box}$ is the
quiver with the set of points $\kQ^{\Box}_{0}=\kQ_0\times \mZ$ and
with the set of arrows $\kQ^{\Box}_{1}=\kQ_1\times\mZ$, where for
given $\al:a\to b$ in $\kQ$ we set $s((\al,i))=(b,i)$ and
$t((\al,i)):=(a,i-1)$.

Consider the normal free box $\dA=\dA(\kA)=(\kA^{\Box},\kW)$, with the
kernel $\overline{\kW}$ freely generated by the set
$\{\varphi_{\al,i}\, |\, \al\in \kQ_1, i\in\mZ\}$, where
$s(\varphi_{\al,i})=(t(\al),i)$, $t(\varphi_{\al,i})=(s(\al),i)$
and with zero differential $\dd$.

Given a box $\dA$, we denote by $\rep(\dA)$ the category of finite
dimensional representations of $\dA$.

Let us consider the following functor ${\bf F}:
\rep(\dA(\kA))\rightarrow \kP^b_{\min}(\kA)$.

 A representation $M\in\rep(\dA)$ is given by vector spaces $M(x,n)$ and linear mappings
 $M(\al,n): M(y,n)\to M(x,n-1)$, where $\al\in\kQ_1[x,y]$
 and $x,y\in\kQ_0,\,n\in\mZ$.
 For such a representation, set $P_n=\bigoplus_{x\in \kQ_0} \kA^x\*M(x,n)$
 and $d_n=\bigoplus_{x,y\in\kQ_0}\sum_{\al\in\kQ[x,y]}\kA^\al\*M(\al,n)$. A morphism
$\Psi:M\to M^\prime$ is given by linear mappings
$\Psi(z,n):M(z,n)\to M^\prime(z,n)$ and
$\Psi(\varphi_{\al,m}):M(y,n)\to M^\prime(x,n)$, where
$x,y,z\in\kQ_0$ and $\al\in\kQ_1[x,y]$. We define a homomorphism
$F(\Psi):F(M)\to F(M^\prime)$ by the following rule. Given
$x,y\in\kQ_0$ we set
$$
\widetilde{\kQ}_1[x,y] =\left\{
\begin{array}{ll}
{\kQ}_1[x,y] &\mbox{, if $x\ne y$}\\
{\kQ}_1[x,y]\cup\{1_x\} & \mbox{, otherwise.}
\end{array}
\right.
$$

For given $\al\in\widetilde{\kQ}_1[x,y]$ we set
$$
\Psi_{\al,n} =\left\{
\begin{array}{ll}
\Psi(\varphi_{\al,n}) &\mbox{, if $\al\in\kQ_1$}\\
\Psi(x,n) & \mbox{, otherwise.}
\end{array}
\right.
$$

Then $F(\Psi)$ is defined by
$F(\Psi)_n=\bigoplus_{x,y\in\kQ_0}\sum_{\al\in\widetilde{\kQ}[x,y]}\kA^\al\*\Psi_{\al,n}$.

 The following
Theorem follows from the Theorem 2.2 in \cite{BD}.

\begin{theorem}\label{equiv}
${\bf F}$ is an equivalence of categories.
\end{theorem}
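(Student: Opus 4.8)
The plan is to exhibit the stated functor $\mathbf F$ explicitly, verify it is well-defined (lands in $\kP^b_{\min}(\kA)$), and then invoke Theorem 2.2 of \cite{BD}, which establishes the analogous equivalence for an arbitrary lofd category, by checking that our box $\dA(\kA)$ is precisely the box that loc.\ cit.\ associates to a radical-square-zero category, so that $\mathbf F$ coincides with the functor constructed there. Throughout one uses the key simplification afforded by $\rad^2\kA=0$: the only relations in $\kA=\add\Mk\kQ/\kI$ are $\al\be=0$ for composable arrows, so a projective $\kA$-module $\kA^x$ has a two-term structure (top $S_x$, radical a sum of $S_y$'s with multiplicities $\dim\kQ_1[x,y]$), and consequently a homomorphism $\kA^x\to\kA^y$ is determined by a scalar (the $1_x$-component when $x=y$) together with a tuple of scalars indexed by $\kQ_1[y,x]$ (the arrow-components into the radical). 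This is exactly the combinatorial data $\Psi_{\al,n}$ recorded above, and it is why the box has kernel freely generated by the $\varphi_{\al,i}$ with zero differential: there are no higher relations to encode.

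\emph{First} I would check that $\mathbf F$ is well-defined on objects. Given $M\in\rep(\dA)$, the complex $F(M)$ has $P_n=\bigoplus_{x\in\kQ_0}\kA^x\*M(x,n)$, a finitely generated projective since $\supp M$ is finite, and differential $d_n=\bigoplus\sum_{\al}\kA^\al\*M(\al,n)$. Because each $\kA^\al:\kA^{s(\al)}\to\kA^{t(\al)}$ has image inside $\rad\kA^{t(\al)}$ (as $\al\in\rad\kA$), one gets $\im d_n\subseteq\rad P_{n-1}$ automatically, so $F(M)\in\kP^b_{\min}(\kA)$. The identity $d_{n-1}d_n=0$ follows from $\rad^2\kA=0$: the composite involves $\al\be$ for arrows $\al,\be$, which is zero in $\kA$. \emph{Next}, on morphisms: one must check $F(\Psi\Psi')=F(\Psi)F(\Psi')$ with respect to the box composition law recalled in Section~\ref{s2} (the zig-zag through $\mu\*1$ and $1\* f$), and that $F$ preserves identities; here the zero differential makes $\mu$ act on $\oV$ simply, and the composition of representations reduces, component by component, to ordinary matrix multiplication of the scalar data $\Psi_{\al,n}$, matching composition of chain maps. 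Functoriality is then a direct, if slightly tedious, bookkeeping check.

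\emph{For full faithfulness and essential surjectivity}, rather than redo everything by hand I would appeal directly to \cite{BD}. Theorem~2.2 there constructs, for any lofd $\kA$, a box whose representation category is equivalent to (a minimal-complex model of) $\kD^b(\kA)$; the box is built from the quiver of $\kA$ together with generators of $\kI$, and its differential encodes those relations. Specializing to $\rad^2\kA=0$, the relation-part of the differential vanishes (the relations are the length-two paths, whose ``correction terms'' in the bar-type construction collapse), and what remains is exactly $\dA(\kA)=(\kA^\Box,\kW)$ as defined above: points $\kQ_0\times\mZ$, arrows $\kQ_1\times\mZ$ shifting degree by $1$, kernel freely generated by $\{\varphi_{\al,i}\}$, zero differential. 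One then identifies the \cite{BD}-functor with $\mathbf F$ on objects and morphisms (both send a representation to the same projective complex and the same chain-map data), and the theorem follows. \emph{The main obstacle} is precisely this last identification: one must unwind the general construction of \cite{BD} — which is phrased via a projective resolution / bar-type differential — far enough to see that in the radical-square-zero case it degenerates to the transparent description given here, and in particular that no signs or extra differential terms survive. Once that matching is in place, surjectivity (every minimal bounded projective complex arises, since its components and differentials are visibly of the form $\bigoplus\kA^x\*V_x$ and $\bigoplus\sum\kA^\al\*(\text{scalars})$) and faithfulness (the scalar data determines $\Psi$) are immediate consequences of the corresponding statements in \cite{BD}.
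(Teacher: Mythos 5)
Your proposal takes essentially the same route as the paper, which gives no independent argument and simply states that the theorem follows from Theorem~2.2 of \cite{BD}; your strategy of identifying $\dA(\kA)$ with the box that \cite{BD} attaches to a radical-square-zero category and then citing that result is exactly the intended proof. The extra verifications you supply (that $d^2=0$ via $\rad^2\kA=0$, that $\im d_n\subseteq\rad P_{n-1}$, and that the \cite{BD} differential degenerates to zero) are correct and in fact more detail than the paper records.
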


We define a shift functor in $\rep(\dA(\kA))$ by the following
rule. Given $M\in \rep(\dA(\kA))$ and $j\in\mZ$ we define $M[j]\in
\rep(\dA(\kA))$ by $M[j](a,i)=M(a,i-j)$ and
$M[j](\al,i)=M(\al,i-j)$. In the same way we can define $[j]$ for
morphisms in $\rep(\dA(\kA))$.

\begin{lemma}\label{shift}
${\bf F}(M[i])={\bf F}(M)[i]$ and ${\bf F}(\varphi[i])={\bf
F}(\varphi)[i]$ for any object $M$ and any morphism $\varphi$ in
$\rep(\dA(\kA))$.
\end{lemma}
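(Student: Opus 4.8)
The plan is to verify the identity $\mathbf{F}(M[i]) = \mathbf{F}(M)[i]$ directly by unwinding both sides from the explicit formula defining $\mathbf{F}$, and then to note that the morphism case follows by the same bookkeeping. First I would observe that it suffices to treat the case $i = 1$, since $[i]$ is obtained by iterating $[1]$ (and $[-1]$ is its inverse), and both functors $\mathbf{F}(-[1])$ and $\mathbf{F}(-)[1]$ are compatible with composition. So I reduce to showing $\mathbf{F}(M[1]) = \mathbf{F}(M)[1]$.

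Next I would write out $\mathbf{F}(M[1])$ using the definition: its degree-$n$ term is $P_n(M[1]) = \bigoplus_{x \in \kQ_0} \kA^x \otimes M[1](x,n) = \bigoplus_{x \in \kQ_0} \kA^x \otimes M(x,n-1)$, which is precisely $P_{n-1}(M)$; and its differential in degree $n$ is $d_n(M[1]) = \bigoplus_{x,y} \sum_{\al \in \kQ[x,y]} \kA^\al \otimes M[1](\al,n) = \bigoplus_{x,y} \sum_{\al} \kA^\al \otimes M(\al, n-1) = d_{n-1}(M)$. On the other hand, by the standard convention for the shift functor on complexes, $\mathbf{F}(M)[1]$ has degree-$n$ term $\mathbf{F}(M)_{n-1} = P_{n-1}(M)$ and differential $d_{n-1}^{\mathbf{F}(M)}$ (up to the usual sign $(-1)$, depending on the sign convention fixed for $[1]$; one must check the chosen convention matches, or absorb the sign into an isomorphism of complexes). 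Comparing these term by term gives the claimed equality of complexes.

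For morphisms, I would run the analogous computation: a morphism $\Psi : M \to M'$ shifts to $\Psi[1]$ with $\Psi[1](z,n) = \Psi(z,n-1)$ and $\Psi[1](\varphi_{\al,m})$ given by the corresponding shift, so $\Psi[1]_{\al,n} = \Psi_{\al,n-1}$ for every $\al \in \widetilde{\kQ}_1[x,y]$; plugging into $\mathbf{F}(\Psi[1])_n = \bigoplus_{x,y}\sum_{\al \in \widetilde{\kQ}[x,y]} \kA^\al \otimes \Psi[1]_{\al,n}$ yields $\bigoplus_{x,y}\sum_\al \kA^\al \otimes \Psi_{\al,n-1} = \mathbf{F}(\Psi)_{n-1} = (\mathbf{F}(\Psi)[1])_n$, as required.

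The only genuine subtlety—what I expect to be the main point requiring care rather than a real obstacle—is the sign convention in the definition of the shift functor $[1]$ on complexes: with the convention $d^{C[1]} = -d^C$, the identity holds on the nose only if $\mathbf{F}$ or the shift on $\rep(\dA(\kA))$ is set up to match; otherwise one gets a canonical isomorphism rather than an equality. Since the statement asserts strict equality, I would either check that the sign conventions adopted in Section~\ref{s1} and Section~\ref{s2} are chosen consistently (the likely intent), or remark that the distinction is immaterial because $\mathbf{F}$ is only used up to the equivalence of Theorem~\ref{equiv}. Everything else is a routine matching of indices.
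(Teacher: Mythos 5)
Your direct index-by-index verification is exactly the intended argument: the paper's own proof consists of the single word ``Straightforward,'' and unwinding the definitions of $\mathbf{F}$ and of the shift on $\rep(\dA(\kA))$ as you do is the only thing to check. Your caveat about the sign convention for $[1]$ on complexes is a reasonable point of care that the paper silently glosses over, but it does not change the substance of the proof.
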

\begin{proof}
Straightforward.
\end{proof}

Given a quiver $\kQ$ we fix some vertex $a\in\kQ_0$ and denote by
$\kQ[i]$ the connected component of $\kQ^{\Box}$  which contain
the vertex $(a,i)$. Given a walk $w=w_1\cdots w_n$ in $\kQ$ we
denote by $\eps^+(w)$  (resp.,  $\eps^-(w)$ ) the number of $w_i$ of the form $p$ (resp., $p^{-1}$), $p$ being
an arrow. We set $\eps(w)=|\eps^+(w)-\eps^-(w)|$. We denote by
$\kQ^{c}$ the set of all closed walks in $\kQ$ and set
$\eps(\kQ)=\min_{w\in\kQ^{c}}\eps(w)$ in case of
$\kQ^{c}\ne\emptyset$ and $\eps(\kQ)=0$ otherwise. We say that a quiver $\kQ$ satisfies the \emph{walk condition}
provided $\eps(\kQ)=0$ (= the number of clockwise oriented arrows is the same as
the number of counterclockwise oriented arrows for any
closed walk $w$ of $\kQ$).

\begin{lemma}\label{path}
Let $b\in \kQ_0$, $i,j\in\mZ$ and $a$ as above. Then
$(b,j)\in\kQ[i]$ \iff there exists a walk $w$ from
$a$ to $b$ in $\kQ$ such that $j=i+\eps^+(w)-\eps^-(w)$.
\end{lemma}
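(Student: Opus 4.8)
The plan is to describe the connected components of $\kQ^{\Box}$ combinatorially by projecting walks in the underlying graph of $\kQ^{\Box}$ down to walks in $\kQ$, while keeping careful track of the second ($\mZ$-) coordinate. The first step is to record the elementary bookkeeping rule: for an arrow $\al\colon x\to y$ of $\kQ$, the arrow $(\al,m)$ of $\kQ^{\Box}$ runs from $(y,m)$ to $(x,m-1)$; hence traversing $(\al,m)$ in its given (forward) direction corresponds, after forgetting the $\mZ$-coordinate, to the walk-step $\al^{-1}$ in $\kQ$ (from $t(\al)$ to $s(\al)$) and lowers the $\mZ$-coordinate by $1$, whereas traversing $(\al,m)$ backwards corresponds to the step $\al$ and raises the $\mZ$-coordinate by $1$. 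In either case the $\mZ$-coordinate changes by exactly $\pm1$, with the sign $+1$ for steps of the form $p$ and $-1$ for steps of the form $p^{-1}$.

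For the direction ``$(b,j)\in\kQ[i]\Rightarrow$ walk exists'', I would take a walk in the underlying graph of $\kQ^{\Box}$ from $(a,i)$ to $(b,j)$ (which exists since these vertices lie in the same connected component), write it as a sequence of arrows $(\al_1,m_1),\dots,(\al_n,m_n)$ each traversed in one of the two directions, and project it to the sequence $w=w_1\cdots w_n$ in $\kQ$, where $w_k=\al_k$ if $(\al_k,m_k)$ is traversed backwards and $w_k=\al_k^{-1}$ otherwise. Compatibility of sources and targets for $w$ follows at once from that of the original walk, so $w$ is a walk from $a$ to $b$. Summing the coordinate changes from the first step gives $j-i=\eps^{+}(w)-\eps^{-}(w)$, as required.

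Conversely, given a walk $w=w_1\cdots w_n$ from $a$ to $b$ in $\kQ$ with $j=i+\eps^{+}(w)-\eps^{-}(w)$, I would lift it inductively to a walk in $\kQ^{\Box}$ starting at $(a,i)$: at each stage, if the current vertex is $(c,m)$ and the next $\kQ$-step is an arrow $\al$ with $s(\al)=c$, traverse $(\al,m+1)$ backwards to reach $(t(\al),m+1)$; if it is $\al^{-1}$ with $t(\al)=c$, traverse $(\al,m)$ forwards to reach $(s(\al),m-1)$. Each choice is forced and legitimate because $w$ is a walk, and by the bookkeeping rule the $\mZ$-coordinate after $n$ steps equals $i+\eps^{+}(w)-\eps^{-}(w)=j$ while the $\kQ_0$-coordinate is $t(w_n)=b$; thus $(b,j)$ and $(a,i)$ are joined by a walk in $\kQ^{\Box}$, i.e. $(b,j)\in\kQ[i]$. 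There is no real obstacle here beyond the sign convention linking the two traversal directions of an arrow of $\kQ^{\Box}$ to $\eps^{+}$ and $\eps^{-}$, which the preliminary step pins down once and for all.
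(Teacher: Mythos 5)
Your proof is correct and is exactly the argument the paper has in mind (its own proof is just the word ``Straightforward''): you correctly pin down the sign convention that the arrow $(\al,m)$ of $\kQ^{\Box}$ runs from $(t(\al),m)$ to $(s(\al),m-1)$, so that a step $p$ in a walk raises the $\mZ$-coordinate by $1$ and a step $p^{-1}$ lowers it by $1$, and both the projection and the lifting arguments go through. Nothing to add.
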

\begin{proof}
Straightforward.
\end{proof}

\begin{corollary}\label{component}
Let $\kQ$ be a connected quiver. Then $\kQ[i]=\kQ[j]$ \iff there
exists a closed walk $w$ in $\kQ$ such that $i\equiv j\,\,
(\mod \eps(w))$.
\end{corollary}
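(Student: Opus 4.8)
The plan is to reduce the statement about connected components of $\kQ^{\Box}$ to the combinatorial description of reachability already provided by Lemma~\ref{path}. Recall that, by Lemma~\ref{path}, a vertex $(b,j)$ lies in $\kQ[i]$ if and only if there is a walk $w$ from $a$ to $b$ in $\kQ$ with $j = i + \eps^+(w) - \eps^-(w)$. Since $\kQ$ is connected, for the chosen base vertex $a$ we have that $\kQ[i]$ is precisely the set of vertices $(b,j)$ reachable from $(a,i)$, and similarly $\kQ[j]$ consists of those reachable from $(a,j)$. Thus $\kQ[i] = \kQ[j]$ holds exactly when $(a,j) \in \kQ[i]$ (equivalently $(a,i)\in\kQ[j]$), because two connected components of a graph coincide as soon as they share a single vertex, and conversely if they coincide then in particular $(a,j)$, which lies in $\kQ[j]$, must also lie in $\kQ[i]$.

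So the core of the argument is to show: $(a,j)\in\kQ[i]$ \iff there exists a closed walk $w$ in $\kQ$ with $i\equiv j\pmod{\eps(w)}$, where for $\eps(w)=0$ the congruence is read as $i=j$. First I would apply Lemma~\ref{path} with $b = a$: $(a,j)\in\kQ[i]$ iff there is a walk $w$ from $a$ to $a$ — i.e. a \emph{closed} walk — with $j - i = \eps^+(w) - \eps^-(w)$. Write $\delta(w) := \eps^+(w) - \eps^-(w)$, so $\eps(w) = |\delta(w)|$. Hence $\kQ[i]=\kQ[j]$ \iff $j - i = \delta(w)$ for some closed walk $w$ at $a$. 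It remains to see that ``$j-i = \delta(w)$ for some closed walk $w$ at $a$'' is equivalent to ``$i \equiv j \pmod{\eps(w')}$ for some closed walk $w'$ in $\kQ$'' (at any vertex).

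For the forward direction, given a closed walk $w$ at $a$ with $j-i = \delta(w)$, simply take $w' = w$: then $\eps(w') = |\delta(w)| = |j-i|$ divides $j-i$, so $i\equiv j\pmod{\eps(w')}$ (including the degenerate case $j=i$, where either $\delta(w)=0$ or we take the trivial walk). For the converse, suppose $w'$ is a closed walk at some vertex $c$ with $i\equiv j\pmod{\eps(w')}$, say $j - i = k\,\eps(w')$ for some $k\in\mZ$. Since $\kQ$ is connected, pick a walk $u$ from $a$ to $c$; then $u w' u^{-1}$ is a closed walk at $a$ with $\delta(u w' u^{-1}) = \delta(w')$ (the contributions of $u$ and $u^{-1}$ cancel). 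Concatenating $|k|$ copies of $w'$ (or of its reverse $\overline{w'}$, for which $\delta(\overline{w'}) = -\delta(w')$, depending on the sign of $k$ and of $\delta(w')$) sandwiched between $u$ and $u^{-1}$, one obtains a closed walk $w$ at $a$ with $\delta(w) = k\,\eps(w') = \pm(j-i)$; if the sign is wrong, replace $w$ by $\overline{w}$. Thus $j - i = \delta(w)$ for a closed walk at $a$, and Lemma~\ref{path} gives $(a,j)\in\kQ[i]$, i.e. $\kQ[i]=\kQ[j]$.

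The only mildly delicate point — and the step I expect to require the most care — is bookkeeping the \emph{signs} of $\delta$ under reversal and concatenation of walks, so that one can always land exactly on $\delta(w) = j - i$ rather than merely on $|\delta(w)| = |j-i|$; this is handled by the observation $\delta(\overline{w}) = -\delta(w)$ together with $\delta$ being additive under concatenation. Everything else is routine connectivity of $\kQ$ plus Lemma~\ref{path}, so the proof can be written as ``Straightforward'' or spelled out in a few lines along the lines above.
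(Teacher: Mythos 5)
Your proof is correct and follows essentially the same route as the paper: both directions reduce via Lemma~\ref{path} to the existence of a closed walk $w$ at the base vertex $a$ with $j-i=\eps^+(w)-\eps^-(w)$, and the converse is handled by conjugating a power of the given closed walk by a connecting walk $u$ (the paper writes $v=u^{-1}w^mu$, you write $uw'^{\pm|k|}u^{-1}$, which is the same idea; your ordering is in fact the one consistent with the paper's left-to-right composition convention). Your explicit bookkeeping of the sign of $\delta$ under reversal is a point the paper leaves implicit in the choice of $m$, so nothing is missing.
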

\begin{proof}
Let $a\in \kQ$ be as above.

 $"\Longrightarrow."$
Suppose that $\kQ[i]=\kQ[j]$ for some $i\ne j\in\mZ$. Then
$(a,j)\in \kQ[i]$ and by Lemma~\ref{path} there exists a
walk $w$ from $a$ to $a$ in $\kQ$ such that
$j=i+\eps^+(w)-\eps^-(w)$, hence $i\equiv j\,\, (\mod \eps(w))$.

$"\Longleftarrow."$ Let $w$ be a closed walk in $Q$ such
that $i\equiv j$\,\, $(\mod \eps(w))$. Then $j=i+m\eps(w)$ for
some $m\in\mZ$. Since $\kQ$ is connected, there exists a
walk $u$ from $a$ to $s(w)$. Then for the
closed walk $v=u^{-1}w^mu$ in $\kQ$ we have
$j=i+\eps^+(v)-\eps^-(v)$. Therefore $(a,j)\in\kQ[i]$ by
Lemma~\ref{path} and hence $\kQ[j]=\kQ[i]$.
\end{proof}

\begin{corollary}\label{component1}
Let $\kQ$ be a connected quiver. Then $\kQ[i]=\kQ[j]$ \iff
$i\equiv j\,\, (\mod \eps(\kQ))$.
\end{corollary}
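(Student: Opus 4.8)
The plan is to deduce Corollary~\ref{component1} directly from Corollary~\ref{component} by showing that the two congruence conditions appearing there are equivalent. Concretely, the statement ``$i\equiv j\pmod{\eps(\kQ)}$'' will be identified with the statement ``there exists a closed walk $w$ in $\kQ$ with $i\equiv j\pmod{\eps(w)}$'', after which Corollary~\ref{component} finishes the argument. I would first dispose of the degenerate case: if $\kQ^c=\emptyset$, then $\eps(\kQ)=0$ by definition, congruence modulo $0$ means equality, and on the other side there are no closed walks at all, so Corollary~\ref{component} says $\kQ[i]=\kQ[j]$ \iff $i=j$; both formulations then agree. So assume $\kQ^c\ne\emptyset$ and set $e=\eps(\kQ)=\min_{w\in\kQ^c}\eps(w)$.

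The heart of the matter is the arithmetic fact that the set $S=\setsuch{\eps(w)}{w\in\kQ^c}\subseteq\mZ_{\ge0}$ has the property that every element of $S$ is a multiple of its minimum $e$; equivalently, $S\setminus\{0\}$ generates (under the operations available from walk concatenation) exactly the submonoid $e\mN$. The key closure property is that $\kQ^c$ is closed under the operations used in the proof of Corollary~\ref{component}: given closed walks $w_1,w_2$ and a connecting walk $u$ from $s(w_1)$ to $s(w_2)$, the walk $w_1\,u\,w_2^{\pm1}\,u^{-1}$ is again closed, and its $\eps^+-\eps^-$ value is $(\eps^+(w_1)-\eps^-(w_1))\pm(\eps^+(w_2)-\eps^-(w_2))$, since the contributions of $u$ and $u^{-1}$ cancel. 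Thus the set of attainable values $\setsuch{\eps^+(w)-\eps^-(w)}{w\in\kQ^c}\subseteq\mZ$ is a subgroup of $\mZ$, hence equal to $e'\mZ$ for some $e'\ge0$; and $e=\min_{w\in\kQ^c}\abs{\eps^+(w)-\eps^-(w)}=e'$. This is exactly the computation already carried out inside the proof of Corollary~\ref{component} (the walk $v=u^{-1}w^mu$), so I would simply invoke that reasoning.

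With this in hand the equivalence is immediate. If $i\equiv j\pmod e$, pick any $w_0\in\kQ^c$ achieving $\eps(w_0)=e$; then $i\equiv j\pmod{\eps(w_0)}$, so by Corollary~\ref{component} we get $\kQ[i]=\kQ[j]$. Conversely, if $\kQ[i]=\kQ[j]$, Corollary~\ref{component} yields a closed walk $w$ with $i\equiv j\pmod{\eps(w)}$, i.e. $\eps(w)\mid(i-j)$; but $\eps(w)\in e\mZ$ by the subgroup fact above, so $e\mid\eps(w)\mid(i-j)$, whence $i\equiv j\pmod e$. Combining the two directions gives $\kQ[i]=\kQ[j]$ \iff $i\equiv j\pmod{\eps(\kQ)}$.

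I do not expect a serious obstacle here; the only thing requiring a moment's care is the subgroup claim, i.e. that the set of values $\eps^+(w)-\eps^-(w)$ over closed walks $w$ is closed under negation and addition. Negation is trivial (reverse the walk), and addition is handled by the conjugation-and-concatenation trick using connectedness of $\kQ$, precisely as in the $"\Longleftarrow"$ direction of Corollary~\ref{component}. Once that is noted, the divisibility ``$e\mid\eps(w)$ for every closed walk $w$'' — which is what upgrades the existential quantifier in Corollary~\ref{component} to the clean modulus $\eps(\kQ)$ — follows at once, and the proof is essentially a two-line deduction.
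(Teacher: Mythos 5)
Your proof is correct and takes essentially the same route as the paper: the paper's entire argument is the observation that $\eps(w)$ is a multiple of $\eps(\kQ)$ for every closed walk $w$ (declared ``easy to see'' there, and which you actually justify via the subgroup/concatenation argument), followed by an appeal to Corollary~\ref{component}. Your write-up merely supplies the details the paper omits, including the degenerate case $\kQ^c=\emptyset$.
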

\begin{proof}
It is easy to see that if $\kQ$ is connected and
$\kQ^{c}\ne\emptyset$, then for any closed walk $w$ we have
$\eps(w)=m\eps(\kQ)$ for some $m\in\mN$. Hence the statement
follows from Corollary~\ref{component}.
\end{proof}

\begin{lemma}\label{cycle}
\begin{enumerate}
\item Let $\kQ$ be a connected quiver which satisfies the walk
condition. Then $\kQ^{\Box}$ is a disjoint union
$\bigsqcup_{i\in\mZ}\kQ[i]$, where $\kQ[i]\simeq \kQ^{\op}$ for all $i$.

\item Let $\kQ$ be a quiver which not satisfy the walk
condition. Then $\kQ^{\Box}$ is the disjoint union
$\bigsqcup_{0\leq i<\eps(\kQ)}\kQ[i]$, where
$\kQ[i]\simeq \kQ^{\diamond}$ for all $i$ for some quiver
$\kQ^{\diamond}$.
\end{enumerate}
\end{lemma}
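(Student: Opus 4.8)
The plan is to build the isomorphisms explicitly from walks, using Lemma~\ref{path} and Corollary~\ref{component1} to control which vertices of $\kQ^{\Box}$ lie in which component $\kQ[i]$. Recall first the combinatorics of $\kQ^{\Box}$: a vertex is a pair $(b,j)$ with $b\in\kQ_0$, $j\in\mZ$, and for each arrow $\al\colon a\to b$ in $\kQ$ there is exactly one arrow $(\al,j)\colon (b,j)\to(a,j-1)$ in $\kQ^{\Box}$ for every $j\in\mZ$. In particular $\kQ^{\Box}$ is locally finite whenever $\kQ$ is, and forgetting the integer coordinate gives a covering-type map $\pi\colon\kQ^{\Box}\to\kQ^{\op}$ (arrows of $\kQ$ become arrows of $\kQ^{\op}$ with source and target swapped, which matches the direction reversal $(\al,j)\colon(b,j)\to(a,j-1)$). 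The restriction of $\pi$ to a single component $\kQ[i]$ is the candidate (iso)morphism of quivers in both parts.

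First I would treat part~(1). Assume $\kQ$ is connected and satisfies the walk condition, so $\eps(\kQ)=0$ and by Corollary~\ref{component1} the components $\kQ[i]$, $i\in\mZ$, are pairwise distinct; together with the obvious fact that every vertex $(b,j)$ lies in some $\kQ[i]$ (connect $a$ to $b$ by a walk $w$ in $\kQ$ and use Lemma~\ref{path} with $i=j-\eps^+(w)+\eps^-(w)$), this shows $\kQ^{\Box}=\bigsqcup_{i\in\mZ}\kQ[i]$. It remains to see $\kQ[i]\simeq\kQ^{\op}$, and for this I claim $\pi|_{\kQ[i]}$ is a bijection on vertices. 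Surjectivity is Lemma~\ref{path} again. For injectivity, suppose $(b,j),(b,j')\in\kQ[i]$; choosing walks $a\leadsto b$ of appropriate ``net length'' $j-i$ and $j'-i$ and concatenating one with the reverse of the other produces a closed walk $w$ at $b$ with $\eps^+(w)-\eps^-(w)=j-j'$, and the walk condition (applied to this closed walk, after transporting it to a closed walk in the connected quiver) forces $\eps^+(w)=\eps^-(w)$, hence $j=j'$. Since $\pi$ is a local isomorphism on arrows (for each arrow $\al$ of $\kQ$ and each preimage $(b,j)$ of its target there is a unique lift), $\pi|_{\kQ[i]}$ is an isomorphism of quivers onto $\kQ^{\op}$.

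For part~(2), $\kQ$ does not satisfy the walk condition, so $e:=\eps(\kQ)>0$. By Corollary~\ref{component1}, $\kQ[i]=\kQ[j]$ exactly when $i\equiv j\pmod e$, so there are precisely $e$ distinct components, represented by $\kQ[0],\dots,\kQ[e-1]$, and as before every vertex of $\kQ^{\Box}$ lies in one of them; thus $\kQ^{\Box}=\bigsqcup_{0\le i<e}\kQ[i]$. Finally, the $e$ components are mutually isomorphic: the translation $\tau\colon(b,j)\mapsto(b,j+1)$, $(\al,j)\mapsto(\al,j+1)$ is an automorphism of the quiver $\kQ^{\Box}$, and by Corollary~\ref{component1} it sends $\kQ[i]$ isomorphically onto $\kQ[i+1]$ (indices mod $e$); iterating, $\kQ[i]\simeq\kQ[0]=:\kQ^{\diamond}$ for all $i$. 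The main obstacle is the injectivity argument in part~(1): one must be careful that ``connected'' is used to move an arbitrary closed walk to one based at the chosen vertex $a$ (as in the proof of Corollary~\ref{component}) and that the walk condition is then genuinely what kills the discrepancy $j-j'$; everything else is the bookkeeping of lifting arrows along $\pi$, which is straightforward from the definition of $\kQ^{\Box}$.
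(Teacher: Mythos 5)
Your proof is correct and follows the same route as the paper: both rest on Lemma~\ref{path} and Corollaries~\ref{component}--\ref{component1} for the indexing of components, with the isomorphisms $\kQ[i]\simeq\kQ^{\op}$ (via the forgetful projection) and $\kQ[i]\simeq\kQ[j]$ (via the translation $(b,j)\mapsto(b,j+1)$) left as ``easy to see'' in the paper and spelled out explicitly by you. The only caveat, shared with the paper, is that part~(2) implicitly needs $\kQ$ connected for Corollary~\ref{component1} and for the components $\kQ[0],\dots,\kQ[\eps(\kQ)-1]$ to exhaust $\kQ^{\Box}$.
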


\begin{proof}
\begin{enumerate}
\item It follows from Corollary~\ref{component} that if $i\ne j$
we have $\kQ[i]\ne\kQ[j]$. It is easy to see that in this case
$\kQ[i]\iso \kQ^{\op}$.

\item It is easy to see that $\kQ[i]\iso\kQ[j]$ for all
$i,j\in\mZ$. Therefore the statement follows from
Corollary~\ref{component}.
\end{enumerate}
\end{proof}

\section{Derived representation type}\label{s3}

\begin{theorem}\label{drt}

Let $\kA$ be a lofd connected category with radical square zero.

\begin{enumerate}
\item $\kA$ is derived tame \iff  $\kQ_\kA$ is a Dynkin quiver (of
types $\mA_n\, (n\geq 1)$, $\mD_n\, (n\geq 4)$, $\mE_n\, (8\geq
n\geq 6)$) or an Euclidian quiver (of types $\tilde {\mA}_n\,
(n\geq 1)$, $\tilde {\mD}_n\, (n\geq 4)$, $\tilde {\mE}_n\, (8\geq
n\geq 6)$) or a quiver of types $\mA_\8$, $\mA_\infty^{\8}$ or
$\mD_\8$.

\item $\kA$ is derived discrete \iff
 $\kQ_\kA$ is a Dynkin quiver
or an Euclidian quiver $\tilde {\mA}_n\, (n\geq 1)$ which does not
satisfy the walk condition or a quiver of types $\mA_\8$,
$\mA_\8^{\8}$ or $\mD_\8$.

\item $\kA$ is derived finite \iff $\kQ_\kA$ is a Dynkin quiver.
\end{enumerate}
\end{theorem}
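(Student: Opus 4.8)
The three classifications in Theorem~\ref{drt} will be obtained by transporting everything through the equivalence $\mathbf F:\rep(\dA(\kA))\to\kP^b_{\min}(\kA)$ of Theorem~\ref{equiv} (which, combined with Proposition~\ref{ver}, controls all of $\ind\kD^b(\kA)$), and then analysing the box $\dA(\kA)=(\kA^{\Box},\kW)$ directly. The first step is to exploit the structure of $\kQ^{\Box}$ given in Lemma~\ref{cycle}: since $\rad^2\kA=0$, the box $\dA(\kA)$ has \emph{zero} differential, so a representation of $\dA(\kA)$ is literally just a representation of the quiver $\kQ^{\Box}$ (the solid arrows), while morphisms are twisted by the free kernel $\overline{\kW}$ — in other words $\rep(\dA(\kA))$ is the category obtained from $\rep(\kQ^{\Box})$ by making the arrows $\varphi_{\al,i}$ into "extra" degree-$(-1)$ morphisms. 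Because the differential vanishes, this box is of the type whose representation theory is governed by the underlying graph of $\kQ^{\Box}$ exactly as for a hereditary algebra; concretely, up to the shift functor $[j]$ of Lemma~\ref{shift} one reduces to a single connected component $\kQ[i]$, which by Lemma~\ref{cycle} is either $\kQ^{\op}$ (walk condition holds) or a fixed quiver $\kQ^{\diamond}$ (walk condition fails, $\eps(\kQ)$ components, cyclically permuted by the shift).

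**Reduction to a hereditary classification.** The heart of the argument is to identify the (minimal algorithm / Tits-form) representation type of the box $\dA(\kA)$ with that of the path algebra $\Mk\kQ^{\Box}$, i.e. of $\Mk\kQ^{\op}\cong(\Mk\kQ)^{\op}$ when the walk condition holds and of $\Mk\kQ^{\diamond}$ otherwise. For this I would run the standard reduction functors for boxes (loop/edge reduction à la \cite{D1},\cite{D2}): the free kernel generators $\varphi_{\al,i}$ are "dotted arrows with zero differential", and reducing them produces precisely the bimodule problem for the quiver $\kQ^{\Box}$. One then invokes the classical dichotomy: such a quiver problem is tame (in fact finite- or discrete-representation type in the appropriate sense) iff the underlying graph is a disjoint union of Dynkin, Euclidean, or the infinite diagrams $\mA_\8,\mA_\8^\8,\mD_\8$. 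So I must check: (i) if $\kQ_\kA$ is Dynkin then $\kQ^{\op}$ is Dynkin and (being finite, hereditary, of finite representation type) gives only finitely many indecomposable complexes of each vector rank after accounting for the $\mZ$-many shifts $\kQ[i]$ — hence derived finite; (ii) if $\kQ_\kA$ is Euclidean, or $\mA_\8/\mA_\8^\8/\mD_\8$, then each component is tame hereditary / locally of one of the allowed infinite types, giving derived tame, and moreover when the walk condition fails for $\tilde{\mA}_n$ the finitely many components $\kQ^{\diamond}$ are of type $\mA$-diagrams, still yielding only a discrete (countable, no rational one-parameter families surviving) list — derived discrete; (iii) in all remaining cases $\kQ^{\Box}$ contains a wild subquiver, so by the wildness criterion for box representations $\kA$ is derived wild, and in particular not derived tame. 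The separation between "derived tame but not discrete" and "derived discrete" comes from whether $\kQ^{\op}$ (or $\kQ^{\diamond}$) admits homogeneous tubes, i.e. genuine $\Mk[t]$-families: a connected Euclidean quiver satisfying the walk condition keeps its tubular families (derived tame, not discrete), whereas an $\tilde{\mA}_n$ violating the walk condition collapses, under the cyclic identification of components, to a finite "zigzag" of type $\mA$ with no homogeneous tubes (derived discrete).

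**Assembling the three equivalences.** With the above in hand, part (1) follows by listing exactly those $\kQ_\kA$ for which $\kQ^{\Box}$ has all components of tame type: Dynkin, Euclidean, $\mA_\8$, $\mA_\8^\8$, $\mD_\8$ — every other connected $\kQ_\kA$ forces a wild component. Part (3) follows because a bounded complex of projectives over a finite-dimensional $\kA$ has only finitely many possible vector ranks in a given total dimension, and over a Dynkin $\kQ$ there are finitely many indecomposable representations of $\kQ^{\op}$ altogether, so $\ind\kP^b_{\min}(\kA)$ is finite modulo shift and $\kX(\kA)$ contributes nothing new beyond that (using the Remark after Proposition~\ref{ver} once one notes that, although $\gdim\kA=\8$ in the presence of cycles, for Dynkin quivers with $\rad^2=0$ the relevant truncations $\be(M)$ for $M\in\kX(\kA)$ are already among the $\be$ of the finitely many complexes); conversely any non-Dynkin $\kQ_\kA$ has a component of $\tilde{\mA}$, $\mA_\8$ or larger type, which already has infinitely many indecomposables of bounded rank, so $\kA$ is not derived finite. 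Part (2) is the intermediate statement: derived discrete excludes exactly the Euclidean quivers satisfying the walk condition (those are tame-not-discrete) and includes Dynkin, $\tilde{\mA}_n$ without the walk condition, $\mA_\8$, $\mA_\8^\8$, $\mD_\8$. \textbf{The main obstacle} I anticipate is the bookkeeping in part (2): proving that an $\tilde{\mA}_n$ violating the walk condition really produces a \emph{discrete} (parameter-free) derived category rather than a tame one — this requires carefully computing $\kQ^{\diamond}$ and the cyclic shift action from Lemma~\ref{cycle}(2), checking that the identification kills precisely the one-parameter tubular family of $\tilde{\mA}_n$, and then matching the surviving indecomposables with Vossieck's list of derived-discrete algebras \cite{V}; the rest is a fairly mechanical application of the known tame/wild dichotomy for quiver (box) problems.
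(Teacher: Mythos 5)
Your proposal follows essentially the same route as the paper: because the box $\dA(\kA)$ has zero differential, the classification reduces via the equivalence $\mathbf F$ and Proposition~\ref{ver} to the representation type of the path category of $\kQ^{\Box}$, whose connected components are identified by Lemma~\ref{cycle} and then handled by the classifications of Gabriel and Nazarova, with the wild case coming from a component properly containing a quiver of type $\tilde{\mA}_n$ or $\mA_\8^\8$. The one point you flag as the main obstacle is dispatched in the paper simply by noting that when $\tilde{\mA}_n$ violates the walk condition the finitely many components $\kQ[i]$ are each isomorphic to $\mA_\8^\8$ (an infinite line rather than the ``finite zigzag'' you describe), which has no one-parameter families, whence derived discreteness.
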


\begin{proof}
Let $\kQ=\kQ_\kA$. We distinguish three cases.

\medskip
\noindent(a) $\kQ$ has no cycles.

Then by Lemma~\ref{cycle} we have in this case
$\kQ^{\Box}=\bigsqcup_{i\in\mZ}\kQ[i]$, where $\kQ[i]=\kQ^{\op}$.
Hence the statements of the Theorem in this case follow from
\cite{G}, \cite{N} and Proposition~\ref{ver}.

\medskip
\noindent(b) $\kQ$ is an Euclidian quiver $\tilde {\mA}_n$.

It follows from Lemma~\ref{cycle} that if $\kQ$ satisfies the
walk condition, then $\kQ^{\Box}=\bigsqcup_{i\in \mZ}\kQ[i]$,
where $\kQ[i]=\kQ^{\op}$, hence $\kA$ is derived tame, but is not
derived discrete by \cite{N} and Proposition~\ref{ver}; and if
$\kQ$ does not satisfy the walk condition, then by Lemma~\ref{cycle}
we have $\kQ^{\Box}=\bigsqcup_{0\leq i < \eps(\kQ)}\kQ[i]$, where
$\kQ[i]\iso\mA^\8_\8$ for all $i$, hence $\kA$ is derived discrete
by \cite{G} and Proposition~\ref{ver}.

\medskip
\noindent(c) $\kQ$ has an Euclidian sub-quiver $\kQ^{\prime}\ne
\kQ$ of type $\tilde {\mA}_n$.

It follows from (b) that $\kQ^{\prime\Box}$ has connected
sub-quiver $X$ of type $\tilde {\mA}_n$ or
$\mA^{\infty}_{\infty}$. Let $X^{\prime}$ be the connected
sub-quiver of $\kQ^{\Box}$ which contains $X$. Since
$\kQ^{\prime}\ne \kQ$, we have $X^{\prime}\ne X$. Therefore  $\Mk
\kQ^{\Box}$ is wild by \cite{N} and hence $\kA$ is derived wild.
\end{proof}

\section{Indecomposable objects}\label{s4}

Let ${\bf F}$ be as in Section~\ref{s2} and $\kX$ as in
Section~\ref{s1}.

\begin{theorem}\label{ind}
Let $\kA$ be a lofd category with radical square zero. Then the
complexes ${\bf F}(M)$ and $\beta(N)$, where $M\in \ind \rep(\Mk
\kQ^{\Box})$ and $N\in\kX(\Mk \kQ^{\Box})$,  constitute an
exhaustive list of pairwise non-isomorphic indecomposable objects
of ${\kD}^{b}(\kA).$
\end{theorem}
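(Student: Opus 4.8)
The plan is to combine Proposition~\ref{ver} with Theorem~\ref{equiv} to transfer the classification of indecomposables in $\kD^b(\kA)$ into the purely combinatorial setting of representations of the quiver $\kQ^{\Box}$. First I would recall that, by Proposition~\ref{ver}, we have the disjoint-union decomposition
$$
\ind\kD^b(\kA) = \ind\kP^b_{\min}(\kA) \cup \{\be(M)\mid M\in\kX(\kA)\},
$$
so it suffices to (i) identify $\ind\kP^b_{\min}(\kA)$ with the complexes ${\bf F}(M)$ for $M\in\ind\rep(\Mk\kQ^{\Box})$, and (ii) identify $\kX(\kA)$ with $\kX(\Mk\kQ^{\Box})$ in a way compatible with the truncation functor $\be$ and the formation of minimal projective resolutions.

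For step (i), the key input is Theorem~\ref{equiv}: the functor ${\bf F}\colon\rep(\dA(\kA))\to\kP^b_{\min}(\kA)$ is an equivalence of categories. Since the box $\dA(\kA)=(\kA^{\Box},\kW)$ has \emph{zero differential}, a representation of the box is literally just a functor $M\colon\kA^{\Box}\to\vec$, i.e. a finite-dimensional representation of the path category $\Mk\kQ^{\Box}$; more precisely $\rep(\dA(\kA))$ is the category whose objects are those of $\rep(\Mk\kQ^{\Box})$ but whose morphisms are enlarged by the $\varphi_{\al,i}$ (the free generators of $\oV$). I would point out that, because the kernel $\oV$ is generated in "degree-preserving" fashion (each $\varphi_{\al,i}$ has source and target with the same $\mZ$-coordinate $i$) and the differential vanishes, the extra morphisms do not change the indecomposability or the isomorphism classes of objects: the underlying object functor $\rep(\dA(\kA))\to\rep(\Mk\kQ^{\Box})$ induces a bijection on isomorphism classes of indecomposables. (This is exactly the kind of "the box is trivial on objects" observation that one expects here; it is where one must be slightly careful, since in general passing from a box to its underlying category can merge or split indecomposables.) Composing with ${\bf F}$ then gives that $\{{\bf F}(M)\mid M\in\ind\rep(\Mk\kQ^{\Box})\}$ is exactly a set of representatives of $\ind\kP^b_{\min}(\kA)$.

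For step (ii), I would unwind the definition of $\kX(\kA)$. An element of $\overline{\kX(\kA)}$ is an indecomposable minimal bounded complex $M$ of projectives such that the minimal projective resolution $P_{\be(M)}$ of its truncation is \emph{not} bounded, i.e. lies in $\kK^{-,b}(\kA)\setminus\kK^b(\kA)$; two such are identified when their resolutions agree in $\kK^{-,b}(\kA)$. Using the equivalence ${\bf F}$ together with Lemma~\ref{shift} (so that ${\bf F}$ commutes with the shift, hence transports the truncation $\be$ correctly — this is the point where one checks that $\be$ and minimal projective resolutions are intrinsic to the triangulated/homotopy structure and therefore are preserved by ${\bf F}$), one gets that $\overline{\kX(\kA)}$ corresponds under ${\bf F}$ to $\overline{\kX(\Mk\kQ^{\Box})}$, and the equivalence relations match; choosing compatible sets of representatives yields a bijection $\kX(\kA)\leftrightarrow\kX(\Mk\kQ^{\Box})$ with $\be({\bf F}(N))\iso{\bf F}(\be(N))$ for $N\in\kX(\Mk\kQ^{\Box})$ (and one may absorb ${\bf F}$ into the notation since $\be(N)$ already lives in $\kP^b_{\min}$ of the path category's completion). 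Substituting both identifications into Proposition~\ref{ver} gives precisely the asserted exhaustive, irredundant list ${\bf F}(M)$, $\be(N)$.

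The main obstacle, I expect, is step (i): making rigorous that passing from representations of the box $\dA(\kA)$ to representations of $\Mk\kQ^{\Box}$ induces a genuine bijection on indecomposables and on isomorphism classes, i.e. that the coalgebra part $\kW$ really is "inessential on objects." This hinges on the zero differential together with the fact that the generators $\varphi_{\al,i}$ preserve the $\mZ$-grading of $\kQ^{\Box}_0$, so that the induced functor is full, dense, and reflects isomorphisms and indecomposability; once this is granted, everything else is a formal transport along the equivalence ${\bf F}$ and the shift-compatibility of Lemma~\ref{shift}, combined with Proposition~\ref{ver}. A secondary point to verify carefully is that $\be$ — defined via the naive/good truncation of a complex in $\kK^b$ — is compatible with ${\bf F}$, i.e. that ${\bf F}$ sends the truncation of a box-representation to the truncation of the corresponding complex; this follows from the explicit description of ${\bf F}$ on objects ($P_n=\bigoplus_x\kA^x\*M(x,n)$, $d_n=\bigoplus\kA^\al\*M(\al,n)$), since truncation is visibly read off from the same data.
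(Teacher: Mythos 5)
Your argument is correct and is essentially the paper's own proof: the paper likewise observes that, since the differential of the box $\dA(\kA)$ is zero, one has $\ind\rep(\dA(\kA))=\ind\rep(\Mk\kQ^{\Box})$, and then concludes by combining Theorem~\ref{equiv} with Proposition~\ref{ver}. Your steps (i) and (ii) simply spell out details (the extra box morphisms forming a radical ideal, and the compatibility of ${\bf F}$ with $\be$ and minimal resolutions) that the paper leaves implicit.
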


\begin{proof}
Since $\dd=0$ for the box $\dA=\dA(\kA)$, we have
 that $\ind\rep(\dA)=\ind\rep(\Mk \kQ^{\Box})$.
Hence the statement follows from Theorem~\ref{equiv} and Proposition~\ref{ver}.
\end{proof}

For a quiver $\kQ$ which satisfies the walk condition, we
denote by $\imath: \rep(\kQ^{\op})\to \rep(\kQ^{\Box})$ the
inclusion functor which sent $\kQ^{\op}$ to $\kQ[0]$. It follows
from Lemma~\ref{cycle} that in this case $\gdim\kA<\8$ ( because
 the quiver $\kQ$ has no oriented cycle) and it is the disjoint
union $\bigsqcup_{i\in\mZ}\kQ[i]$, where $\kQ[i]=\kQ^{\op}$ for
all $i$. Hence we obtain the following Corollary.

\begin{corollary}\label{cind}
Let $\kA$ be a lofd category with radical square zero whose quiver
$\kQ=\kQ_\kA$ satisfies the walk condition. Then the complexes
${\bf F}(\imath(M))[i]$, where $M\in \ind \rep(\Mk \kQ^{\op})$ and
$i\in\mZ$, constitute an exhaustive list of pairwise
non-isomorphic indecomposable objects of ${\kD}^{b}(\kA).$
\end{corollary}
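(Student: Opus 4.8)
The plan is to combine Theorem~\ref{ind} with the structural description of $\kQ^{\Box}$ provided by Lemma~\ref{cycle}(1). The hypothesis that $\kQ=\kQ_\kA$ satisfies the walk condition places us, by definition, in the case $\eps(\kQ)=0$; moreover a connected quiver satisfying the walk condition has no oriented cycle (an oriented cycle is a closed walk $w$ with $\eps^-(w)=0$ and $\eps^+(w)>0$, so $\eps(w)>0$, contradicting $\eps(\kQ)=0$). Hence $\gdim\kA<\8$, so by the Remark after Proposition~\ref{ver} we have $\kX(\Mk\kQ^{\Box})=\emptyset$ and $\ind\kD^b(\kA)=\{\,{\bf F}(M)\mid M\in\ind\rep(\Mk\kQ^{\Box})\,\}$, with the ${\bf F}(M)$ pairwise non-isomorphic by Theorem~\ref{ind}.

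Next I would use Lemma~\ref{cycle}(1) to write $\kQ^{\Box}=\bigsqcup_{i\in\mZ}\kQ[i]$ with each $\kQ[i]\iso\kQ^{\op}$. A representation of a disjoint union of quivers is, canonically and functorially, the direct sum of its restrictions to the connected components; hence every indecomposable representation of $\Mk\kQ^{\Box}$ is supported on exactly one component $\kQ[i]$, i.e.\ is of the form (the image under the component inclusion of) an indecomposable representation of $\kQ[i]\iso\kQ^{\op}$. Here is where the shift functor enters: the self-equivalence $[1]$ of $\rep(\dA(\kA))$ defined before Lemma~\ref{shift} sends data at level $n$ to level $n-1$, so under ${\bf F}$ (via Lemma~\ref{shift}) it corresponds to the shift $[1]$ of $\kD^b(\kA)$; iterating, $\kQ[i]$ is carried to $\kQ[0]$ by the equivalence $[-i]$. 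Therefore every indecomposable representation of $\Mk\kQ^{\Box}$ is isomorphic to $\imath(M)[i]$ for a unique $i\in\mZ$ and a unique (up to iso) $M\in\ind\rep(\Mk\kQ^{\op})$, where $\imath$ is the inclusion identifying $\kQ^{\op}$ with $\kQ[0]$.

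Combining the two paragraphs: applying ${\bf F}$ and invoking Lemma~\ref{shift} once more, $\ind\kD^b(\kA)=\{\,{\bf F}(\imath(M))[i]\mid M\in\ind\rep(\Mk\kQ^{\op}),\ i\in\mZ\,\}$, and these are pairwise non-isomorphic because ${\bf F}$ is an equivalence (Theorem~\ref{equiv}) and the representations $\imath(M)[i]$ are pairwise non-isomorphic in $\rep(\Mk\kQ^{\Box})$ by the uniqueness just established. I expect the only genuinely delicate point to be the bookkeeping that $[i]$ on $\rep(\dA(\kA))$ matches $[i]$ on $\kD^b(\kA)$ under ${\bf F}$ and that the component inclusion is compatible with these shifts — but this is exactly the content of Lemma~\ref{shift} together with the explicit description of $\kQ[i]$ in Lemma~\ref{cycle}(1), so no new ideas are needed beyond assembling the pieces already in hand.
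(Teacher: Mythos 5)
Your write-up is, in its architecture, exactly the paper's own argument: the authors derive the corollary in the paragraph preceding it by (i) claiming the walk condition forces $\kQ$ to have no oriented cycle, hence $\gdim\kA<\infty$ and $\kX=\emptyset$, and (ii) invoking Lemma~\ref{cycle}(1) to identify $\ind\rep(\Mk\kQ^{\Box})$ with $\bigsqcup_{i}\ind\rep(\kQ[i])$, $\kQ[i]\iso\kQ^{\op}$. Your extra bookkeeping (indecomposables are supported on a single component; the shift on $\rep(\dA(\kA))$ matches the shift on $\kD^b(\kA)$ via Lemma~\ref{shift}) is correct and is the right way to make the paper's ``hence we obtain'' precise.

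There is, however, one step that does not hold at the stated level of generality, and you have inherited it verbatim from the paper: the inference ``no oriented cycle $\Rightarrow\gdim\kA<\infty\Rightarrow\kX=\emptyset$.'' This is valid when $\kQ_0$ is finite, but the corollary is stated for arbitrary lofd $\kA$, and Theorem~\ref{drt} explicitly admits infinite quivers such as $\mA_\infty$. Take $\kQ$ of type $\mA_\infty$ with linear orientation $1\to2\to3\to\cdots$; being a tree, it satisfies the walk condition, yet $\Omega S_1=S_2$, $\Omega S_2=S_3,\dots$, so $S_1$ has infinite projective dimension and $\gdim\kA=\infty$. Concretely, the indecomposable minimal complex $N=(\kA^2\to\kA^1)$ has $\beta(N)\simeq S_1$ with unbounded minimal projective resolution, so $N\in\kX(\kA)\neq\emptyset$, and $S_1$ is an indecomposable object of $\kD^b(\kA)$ that is not isomorphic to any bounded complex of projectives, hence not of the form ${\bf F}(\imath(M))[i]$. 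So either the corollary must be restricted (e.g.\ to finite $\kQ$, or to the case where every simple has finite projective dimension), or the $\beta(N)$ terms from Theorem~\ref{ind} must be kept in the list. This is a defect of the source rather than of your reconstruction, but since your proof asserts $\gdim\kA<\infty$ as a consequence of the walk condition alone, the gap is present in your argument as written.
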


\bibliographystyle{amsalpha}

\end{document}